\newcommand{\monthyear}[1]{%
  \def\@monthyear{\uppercase{#1}}}
\newcommand{\volnumber}[1]{%
  \def\@volnumber{\uppercase{#1}}}
\def\ps@plain{\ps@empty
  \def\@oddfoot{\@monthyear \hfil \thepage}%
  \def\@evenfoot{\thepage \hfil \@volnumber}}
\def\ps@firstpage{\ps@plain}
\def\ps@headings{\ps@empty
  \def\@evenhead{%
    \setTrue{runhead}%
    \def\thanks{\protect\thanks@warning}%
\uppercase{The Fibonacci Quarterly}\hfil}%
  \def\@oddhead{%
    \setTrue{runhead}%
    \def\thanks{\protect\thanks@warning}%
    \hfill\uppercase{Deterministic Zeckendorf Games}}%
  \let\@mkboth\markboth
  \def\@evenfoot{%
    \thepage \hfil \@volnumber}%
  \def\@oddfoot{%
    \@monthyear \hfil \thepage}%
  }%
\theoremstyle{plain}
\numberwithin{equation}{section}
\newtheorem{thm}{Theorem}[section]
\newtheorem{lemma}[thm]{Lemma}
\newcommand{\bburl}[1]{\textcolor{blue}{\url{#1}}}
\newcommand{\burl}[1]{\textcolor{blue}{\url{#1}}}
\numberwithin{equation}{section}
\newcommand\be{\begin{equation}}
\newcommand\ee{\end{equation}}
\newcommand\bee{\begin{equation*}}
\newcommand\eee{\end{equation*}}
\newcommand\bea{\begin{eqnarray}}
\newcommand\eea{\end{eqnarray}}
\newcommand\beae{\begin{eqnarray*}}
\newcommand\eeae{\end{eqnarray*}}
\newcommand\bi{\begin{itemize}}
\newcommand\ei{\end{itemize}}
\newcommand\ben{\begin{enumerate}}
\newcommand\een{\end{enumerate}}
\newcommand\bc{\begin{center}}
\newcommand\ec{\end{center}}
\newcommand\ba{\begin{array}}
\newcommand\ea{\end{array}}
\newcommand\frakfamily{\usefont{U}{yfrak}{m}{n}}
\DeclareTextFontCommand{\textfrak}{\frakfamily}
\newtheorem{rek}[thm]{Remark}
\newtheorem{defi}[thm]{Definition}
\newtheorem{conj}[thm]{Conjecture}
\newcommand{\hr}[1]{\href{#1}{\url{#1}}}
\begin{document}
\monthyear{Month Year}
\volnumber{Volume, Number}
\setcounter{page}{1}

\title{Deterministic Zeckendorf Games}

\author{Ruoci Li}
\address{Elite Scholars Program, New York, NY}
\email{\textcolor{blue}{\href{mailto:ruocili2002@yahoo.com}{ruocili2002@yahoo.com}}}

\author{Xiaonan Li}
\address{L\'eman Manhattan Preparatory School, NY}
\email{\textcolor{blue}{\href{mailto:lxn020905@gmail.com}{lxn020905@gmail.com}}}

\author{Steven J. Miller}
\address{Department of Mathematics and Statistics, Williams College, Williamstown, MA 01267}
\email{\textcolor{blue}{\href{mailto:sjm1@williams.edu}{sjm1@williams.edu}}}

\author{Clay Mizgerd}
\address{Department of Mathematics and Statistics, Williams College, Williamstown, MA 01267}
\email{\textcolor{blue}{\href{mailto:cmm12@@williams.edu}{cmm12@@williams.edu}}}

\author{Chenyang Sun}
\address{Department of Mathematics and Statistics, Williams College, Williamstown, MA 01267}
\email{\textcolor{blue}{\href{mailto:cs19@williams.edu}{cs19@williams.edu}}}

\author{Dong Xia}
\address{Ross School, East Hampton, NY}
\email{\textcolor{blue}{\href{mailto:dongxia08@yahoo.com}{dongxia08@yahoo.com}}}

\author{Zhyi Zhou}
\address{Elite Preparatory Academy, NJ}
\email{\textcolor{blue}{\href{mailto:zyzhou00@yahoo.com}{zyzhou00@yahoo.com}}}

%
%
\maketitle

\begin{abstract} Zeckendorf \cite{Ze} proved that every positive integer can be written uniquely as the sum of non-adjacent Fibonacci numbers. We further explore a two-player Zeckendorf game introduced in \cite{BEFMfibq, BEMFcant}: Given a fixed integer $n$ and an initial decomposition of $n = nF_1$, players alternate using moves related to the recurrence relation $F_{n+1}  = F_n + F_{n_1}$, and the last player to move wins. We improve the upper bound on the number of moves possible and show that it is of the same order in $n$ as the lower bound; this is an improvement by a logarithm over previous work. The new upper bound is $3n - 3Z(n) - IZ(n) + 1$, and the existing lower bound is sharp at $n - Z(n)$ moves, where $Z(n)$ is the number of terms in the Zeckendorf decomposition of $n$ and $IZ(n)$ is the sum of indices in the same Zeckendorf decomposition of $n$. We also studied four deterministic variants of the game, where there was a fixed order on which available move one takes: Combine Largest, Split Largest, Combine Smallest and Split Smallest. We prove that Combine Largest and Split Largest realize the lower bound. Split Smallest has the largest number of moves over all possible games, and is close to the new upper bound. For Combine Split games, the number of moves grows linearly with $n$. \end{abstract}

\ \\ Keywords: Fibonacci numbers, Zeckendorf decomposition, game theory. \\ MSC 2010: 11P99 (primary), 11K99 (secondary).

\thanks{The authors were partially supported by NSF grants DMS1265673 and DMS1561945. We thank the Elite Scholars Program for facilitating this collaboration.}




\section{Introduction}\label{section}

\subsection{The Zeckendorf Game}
The Fibonacci numbers are among the most interesting and famous sequences; see \cite{Kos} for a collection of some of their properties. We define them by $F_1 = 1, F_2 = 2$ and $F_{n+1} = F_n + F_{n-1}$; with these initial conditions we have Zeckendorf's Theorem \cite{Ze}: every positive integer has a unique representation as a sum of non-adjacent Fibonacci numbers.\footnote{We clearly lose uniqueness if we include $F_0 = 0$ or start $F_1 = F_2 = 1$.} For example, $$2020 \ = \  1597 + 377 + 34 + 8 + 3 + 1 \  = \ F_{16} + F_{13} + F_8 +  F_5 + F_3 + F_1. $$ There is now an extensive literature on proofs of this theorem and generalizations; see for example \cite{Al, Br, CFHMN1, CFHMN2, CHHMPV, Day, DDKMMV, Fr, GTNP, Ha, Ho, Ke, Len, MW1, MW2}.

Baird-Smith, Epstein, Flint and Miller \cite{BEFMfibq, BEFMcant} create a game based on the Zeckendorf decompositions. We quote from \cite{BEFMcant}, describing the game and previous results. We first introduce some notation. By $\{1^n\}$ or $\{{F_1}^n\}$ we mean $n$ copies of $1$, the first Fibonacci number. If we have 3 copies of $F_1$, 2 copies of $F_2$, and 7 copies of $F_4$, we could write either $\{{F_1}^3 \wedge {F_2}^2 \wedge {F_4}^7 \}$ or $\{1^3 \wedge 2^2 \wedge 5^7\}$.

\begin{defi}[The Two Player Zeckendorf Game]\label{defi:zg}
At the beginning of the game, there is an unordered list of $n$ 1's. Let $F_1 = 1, F_2 = 2$, and $F_{i+1} = F_i + F_{i-1}$; therefore the initial list is $\{{F_1}^n\}$. On each turn, a player can do one of the following moves.
\begin{enumerate}
\item If the list contains two consecutive Fibonacci numbers, $F_{i-1}, F_i$, then a player can change these to $F_{i+1}$. We denote this move $\{F_{i-1} \wedge F_i \rightarrow F_{i+1}\}$.
\item If the list has two of the same Fibonacci number, $F_i, F_i$, then
\begin{enumerate}
\item if $i=1$, a player can change $F_1, F_1$ to $F_2$, denoted by $\{F_1 \wedge F_1 \rightarrow F_2\}$,
\item if $i=2$, a player can change $F_2, F_2$ to $F_1, F_3$, denoted by $\{F_2 \wedge F_2 \rightarrow F_1 \wedge F_3\}$, and
\item if $i \geq 3$, a player can change $F_i, F_i$ to $F_{i-2}, F_{i+1}$, denoted by $\{F_i \wedge F_i \rightarrow F_{i-2}\wedge F_{i+1} \}$.
\end{enumerate}
\end{enumerate}
The players alternative moving. The game ends when one player moves to create the Zeckendorf decomposition.
\end{defi}

The moves of the game are derived from the recurrence, either combining terms to make the next in the sequence or splitting terms with multiple copies. The game is well-defined and ends after at most $i_n \cdot n$ moves, where $i$ is the largest index such that $F_i \le n$. Thus the game takes at most order $n \log n$ moves as the Fibonacci numbers grow exponentially fast.\footnote{With our normalization of $F_1 = 1, F_2 = 2$ we have $F_n$ is the closet integer to $\phi^{n+1}/\sqrt{5}$, where $\phi = (1+\sqrt{5})/2$ is the golden mean.} The shortest game takes $n - Z(n)$ moves, where $Z(n)$ is the number of terms in $n$'s Zeckendorf decomposition; this is realized by using a Greedy Algorithm (at each turn one must move on the largest possible index). If $n>2$ then Player Two has a winning strategy, although the proof of this is an existence proof and does not construct a winning strategy.

\subsection{Deterministic Zeckendorf Games}

As the optimal strategy of the Zeckendorf Game remains elusive, we study instead four deterministic games. These are defined by specifying the order in which moves must be done. While there is thus no strategy,\footnote{Our games are equivalent to the classic card game of War, at least under the assumption that you have no freedom in how you pick up the cards.} it is illuminating to study these special cases as it provides some results that clarify some of the behavior of the general game.

The four games are as follows; for each game we list the order in which the moves must be done. By adding 1's we mean the move $F_1 \wedge F_1 \to F_2$.

\begin{itemize}

\item	\emph{Combine largest:} adding consecutive indices from largest to smallest, adding 1's, splitting from largest to smallest.

\item	\emph{Split largest:} splitting from largest to smallest, adding consecutive indices from largest to smallest, adding 1's.
	
\item	\emph{Combine smallest:} adding 1's, adding consecutive indices from smallest to largest, splitting from smallest to largest.
	
\item	\emph{Split smallest:} splitting from smallest to largest, adding 1's, adding consecutive indices from smallest to largest.
	
\end{itemize}

The number of moves of a game is the sum of the number of combining moves and the number of splitting moves. We let $MC_i$ denote the number of combining moves at the index $i$ with $2 \le i$, with of course $MC_1$ the number of adding 1's. Similarly the number of splitting moves at $i$ is denoted $MS_i$ for $i \ge 2$ (note we are considering adding 1's as a combining move and not a splitting one; we explain this choice in Remark \ref{rek:whycombine}). By an abuse of notation, we also refer to combining moves at $i$ by $MC_i$ and splitting moves at $i$ by $MS_i$.

Our main result is a proof of the conjecture from \cite{BEFMfibq, BEFMcant} that the number of moves in any game is linear in $n$.

\begin{thm}\label{thm:lineargrowth} The number of moves in the longest game is bounded by $3n - 3Z(n) - IZ(n) + 1$, where $Z(n)$ is the number of terms in the Zeckendorf decomposition of $n$ and  $IZ(n)$ is the sum of indices in $n$'s Zeckendorf decomposition. As the number of moves is at least $n - Z(n)$, each game takes order $n$ moves to play. \end{thm}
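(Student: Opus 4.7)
The plan is to construct a potential function $\Phi$ on game states such that every legal move decreases $\Phi$ by at least $1$; the upper bound then follows by evaluating $\Phi_{\text{init}} - \Phi_{\text{final}}$. Neither the piece count $P$ nor the index sum $I$ alone will work, because the move $F_2 \wedge F_2 \to F_1 \wedge F_3$ preserves both. The natural fix is to augment an affine combination of $P$ and $I$ with a correction $-N_1$, where $N_1$ records the current multiplicity of $F_1$. Matching the target $3n - 3Z(n) - IZ(n) + 1$ against the initial values $P = I = N_1 = n$ and terminal values $P = Z(n)$, $I = IZ(n)$, $N_1 = \epsilon_1 \in \{0,1\}$ essentially forces the choice
\[
\Phi \;:=\; 3P + I - N_1,
\]
for which $\Phi_{\text{init}} - \Phi_{\text{final}} = 3n - 3Z(n) - IZ(n) + \epsilon_1 \le 3n - 3Z(n) - IZ(n) + 1$.

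The next step is a case analysis verifying $\Delta \Phi \le -1$ on each of the five move types. Combining moves $F_j \wedge F_{j+1} \to F_{j+2}$ with $j \ge 2$ drop $P$ by $1$ and $I$ by $j-1$, so $\Delta \Phi = -(j+2) \le -4$; the case $j = 1$ additionally drops $N_1$ by $1$ and yields $\Delta \Phi = -2$. The two tight cases that demand the $-N_1$ term are $F_1 \wedge F_1 \to F_2$, which drops $P$ by $1$ and $N_1$ by $2$ so $\Delta \Phi = -1$, and $F_2 \wedge F_2 \to F_1 \wedge F_3$, which leaves $P$ and $I$ fixed but raises $N_1$ by $1$ so $\Delta \Phi = -1$. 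Finally, the splits $F_j \wedge F_j \to F_{j-2} \wedge F_{j+1}$ for $j \ge 3$ always drop $I$ by $1$, and although $j = 3$ introduces a new $F_1$, the index drop absorbs it to give $\Delta \Phi = -2$, while $j \ge 4$ leaves $N_1$ untouched and gives $\Delta \Phi = -1$.

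With this monovariant in hand the upper bound is immediate. The lower bound $M \ge n - Z(n)$ then follows from the much simpler observation that only combining moves change $P$, each dropping it by $1$, so a game cannot reach the Zeckendorf piece count $Z(n)$ in fewer than $n - Z(n)$ moves. The main obstacle I anticipate is identifying the correct coefficients for $\Phi$: the move $F_2 \wedge F_2 \to F_1 \wedge F_3$ is neutral in both $P$ and $I$ and forces the inclusion of the $-N_1$ correction, and I suspect the earlier $n \log n$ upper bound in the literature came from controlling such neutral moves through a separate, cruder argument rather than absorbing them into a single potential.
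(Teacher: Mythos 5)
Your proof is correct and rests on exactly the same three quantities the paper uses --- the term count $P$, the index sum $I$ (the paper's Monovariants I and II), and the multiplicity of $F_1$ (used in the paper to control $MS_2$) --- so it is essentially the same argument, just bundled into the single potential $\Phi = 3P + I - N_1$ rather than three separate accounting identities combined at the end. Your packaging is arguably cleaner (and even marginally sharper, giving $+\epsilon_1$ in place of $+1$), but the bookkeeping is identical to the paper's.
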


The previous upper bound (order $n\log n$) was already very close to the known lower bound (order $n$), indicating that a new perspective would be needed to close the gap and have them at the same order of magnitude. We quickly sketch the key ideas, and highlight why we are able to remove the logarithmic factor. The starting point is the monovariant introduced in \cite{BEFMfibq, BEFMcant}, which we review and expand on in \S\ref{sec:monovariants}. We use related monovariants to derive a bound growing linearly with $n$ for a weighted sum of all moves in a game except for splitting $F_2$'s or $F_3$'s. We then prove that the number of combining moves is independent of how the game is played, and the number of splitting moves at indices 2 or 3 is related to the number of combining moves at 1 and 2, which we just proved grows linearly with $n$ and completes the proof.

More is true for our deterministic games. We can rigorously determine the behavior of two of the games, and conjecture for the other two, with data strongly supporting those claims.

\begin{thm}\label{thm:movesombinelargestsplitlargest} The Combine Largest and Split Largest games also realize the lower bound.  Both have $MS(n) = 0$ and $MC(n) = n - Z(n)$. \end{thm}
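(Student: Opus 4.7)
The plan is to reduce both claims to a single structural invariant, namely that in Combine Largest, and as a corollary in Split Largest, no Fibonacci number $F_i$ with $i\ge 2$ ever appears on the board with multiplicity greater than one. This invariant would immediately force $MS(n)=0$, since every splitting move requires two copies of some $F_i$ with $i\ge 2$. Once splits are excluded, every legal move -- either a consecutive combine $F_j+F_{j+1}\to F_{j+2}$ or the ``add 1's'' combine $F_1+F_1\to F_2$ -- strictly decreases the total number of summands on the board by one; since the game must terminate at the unique Zeckendorf decomposition of $n$, which has $Z(n)$ summands, the total number of moves equals $n-Z(n)$, giving $MC(n)=n-Z(n)$.

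The main task is to prove the invariant for Combine Largest by induction on the move count. The base case $\{F_1^n\}$ is immediate. For the inductive step, the Combine Largest priorities determine the next move uniquely. If it is a consecutive combine $F_j+F_{j+1}\to F_{j+2}$ with $j$ as large as possible, then the ``largest first'' rule, together with the fact that $F_{j+1}$ is currently on the board, forces $F_{j+2}$ to be absent; for otherwise the combine at index $j+1$ would have been selected instead. By the inductive hypothesis each of $F_j$ and $F_{j+1}$ appears exactly once, so after the move their counts drop to zero while $F_{j+2}$ rises from zero to one, creating no duplicate. If the move is $F_1+F_1\to F_2$, then by priority no consecutive combine is available; in particular $F_1$ and $F_2$ are not simultaneously on the board, and since $F_1$ certainly is, $F_2$ must be absent, so after the move it sits at multiplicity one. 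A split is ruled out directly by the inductive hypothesis.

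For Split Largest, splits have top priority but are ruled out at every step by the very same invariant, and the remaining priorities (consecutive combines from largest to smallest, then adding 1's) match those of Combine Largest. Consequently the two games traverse identical sequences of states, and the conclusion transfers verbatim. I expect the main obstacle to be the maximality step of the induction: one must be sure that the ``largest first'' selection really does preclude the presence of $F_{j+2}$ and not merely of some other inconveniently placed Fibonacci term. Since the priority only compares the top indices of available consecutive combines, this reduces to the clean implication used above, and it is the one place where the priority structure is genuinely exploited.
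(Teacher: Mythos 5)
Your proposal is correct and follows essentially the same route as the paper: both prove by induction that in these two games no Fibonacci number $F_i$ with $i \ge 2$ ever attains multiplicity greater than one (using the largest-first priority to rule out the presence of $F_{j+2}$, resp.\ $F_2$), which kills all splitting moves, and then both invoke the fact that combining moves always number exactly $n - Z(n)$. Your observation that the two games therefore traverse identical state sequences is a nice explicit remark, but the substance of the argument is the same.
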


\begin{conj}\label{thm:movessplitsmallcombinesplit} For Split Smallest, the number of moves grows linearly with $n$; numerically the constant appears to be the golden mean squared. For Combine Smallest games, the number of moves grows linearly with $n$, with the constant appearing to be approximately $1.206$.  \end{conj}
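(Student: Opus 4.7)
The upper bound of $O(n)$ for any play is already supplied by Theorem \ref{thm:lineargrowth}, and the identity $MC(n) = n - Z(n)$ (each combine move reduces the list length by one while each split preserves it, so the number of combines is the difference between the initial length $n$ and the terminal length $Z(n)$, independent of the order of play) gives an immediate linear lower bound. What the conjecture asserts beyond this is the existence and exact value of $\lim_{n\to\infty}(MC(n)+MS(n))/n$ for the two smallest-first orderings. Since $MC(n)/n \to 1$, it suffices to show $MS(n)/n \to \phi$ for Split Smallest, yielding $1+\phi = \phi^2$, and $MS(n)/n \to c$ for Combine Smallest with $c \approx 0.206$.

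My plan for Split Smallest is to track the multiplicity vector of the current list and partition the play into phases, where a phase begins whenever the smallest splittable index changes. The deterministic rule forces every phase to exhaust all pairs at that index (interleaved only with forced add-$1$'s and forced small combines) before moving upward, and each split at index $i \ge 3$ acts exactly as $F_i \wedge F_i \to F_{i-2} \wedge F_{i+1}$, so the cascade of mass returned to the low end of the list is governed by the Fibonacci recursion. I would attempt a strong induction on $n$, writing $n = F_{k_1}+\cdots+F_{k_r}$ in Zeckendorf form with $k_1 > \cdots > k_r$ and aiming for a recursion of the form $MS(n) = MS(n - F_{k_1}) + g(k_1) + O(1)$, where $g(k) \sim \phi \cdot F_k$ and the error absorbs interactions between the block for $F_{k_1}$ and the rest. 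Summing and normalizing by $n$ would then yield the constant $\phi$. The precise form of $g$ should come from a weighted monovariant of the type developed in Section \ref{sec:monovariants}, re-tuned so that its per-split decrement matches the smallest-first protocol.

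For Combine Smallest the first phase collapses the $n$ ones into $\lfloor n/2\rfloor$ twos plus a possible leftover one, after which the play cycles between combining $F_1 \wedge F_2 \to F_3$, splitting duplicate pairs $F_i \wedge F_i$, and climbing via consecutive combines. I would build the same multiplicity-vector dynamical system and aim for a periodic or self-similar structure that allows one to read off the limiting ratio $MS(n)/n$. Because the numerical constant $\approx 0.206$ does not match any simple expression in $\phi$ that I recognize at sight, I would first generate high-precision data for $n$ up to several tens of thousands, fit the constant to a candidate of the form $a + b\phi$ with $a, b \in \mathbb{Q}$, and only then attempt to confirm the closed form analytically through the recursion.

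The principal obstacle in both cases is that, unlike Combine Largest and Split Largest, where top-down play synchronizes with the Zeckendorf expansion of $n$ and collapses immediately to $MC(n) = n - Z(n)$ with $MS(n) = 0$ (Theorem \ref{thm:movesombinelargestsplitlargest}), the smallest-first rules induce a nontrivial oscillatory cascade that keeps pushing mass back to the low indices. Controlling this cascade sharply, rather than only up to constants, is exactly what is required to pin down the limiting ratio, and I expect the key lemma to be a decay estimate for an appropriate monovariant showing that each full cycle contributes an amount matching the conjectured constants in the limit.
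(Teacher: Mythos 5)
The statement you are addressing is a \emph{conjecture}: the paper offers no proof of it, only numerical evidence (games played for 1000 consecutive values of $n$ near $1{,}600{,}000$, Figures \ref{fig:splitsmallest1600000plot} and \ref{fig:combinesmallest1600000plot}). So there is no proof in the paper to compare yours against, and what you have written is likewise not a proof but a research plan. Your opening reduction is correct and matches the paper's framework: Theorem \ref{thm:lineargrowth} gives the $O(n)$ upper bound, Lemma \ref{lem:mcn} gives $MC(n) = n - Z(n)$, hence $MC(n)/n \to 1$, and the identity $1 + \phi = \phi^2$ correctly converts the conjectured total $\phi^2 n$ into the claim $MS(n)/n \to \phi$ for Split Smallest. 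That part is sound and is a useful way to frame the problem.

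Everything after that, however, is asserted rather than established, and the assertions sit exactly where the difficulty lies. The recursion $MS(n) = MS(n - F_{k_1}) + g(k_1) + O(1)$ presupposes that the smallest-first dynamics starting from $\{F_1^n\}$ decouples into blocks indexed by the Zeckendorf summands of $n$ with only $O(1)$ interaction; nothing in the game's structure makes this evident, since the initial state is a single undifferentiated pile of $n$ ones and the splitting cascade continually feeds mass back to the bottom of the list, mixing whatever ``blocks'' one tries to isolate. The proposed ``re-tuned'' monovariant is also problematic: Monovariant II assigns decrement zero to adding 1's and to splitting 2's, which are precisely the moves that proliferate under smallest-first play, and any linear-in-index weighting inherits this degeneracy (the relations $1+1=2$ and $2+2 = 3+1$ force it). Pinning down the constant $\phi$, rather than merely bounding $MS(n)$ between two linear functions, would require an invariant or a self-similarity argument of a kind not present in the paper and not supplied here. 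The same gaps apply, more severely, to Combine Smallest, where you concede you do not even have a candidate closed form for the constant $1.206$. In short: the reduction is right, the plan is plausible, but the conjecture remains open both in the paper and in your proposal.
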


We end with two conjectures. In \cite{BEFMfibq, BEFMcant}, it was conjectured that as $n$ goes to infinity, the number of moves in a random game when all legal moves are equally likely converges to a Gaussian. The data suggests the average number of moves is approximately $1.2n$.

\begin{conj}\label{conj:gaussian} The number of splitting moves in a random game converges to a Gaussian, with mean and variance approximately $0.215n$. \end{conj}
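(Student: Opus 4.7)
The plan is to analyze the random Zeckendorf game as a Markov chain on the finite state space of multisets of Fibonacci numbers summing to $n$, where at each step a legal move is chosen uniformly at random among the available ones. Theorem \ref{thm:lineargrowth} and its proof establish that the total number of combining moves is a deterministic function of $n$, so all of the randomness in the total number of moves is carried by the number of splitting moves $S_n = \sum_{i \ge 2} MS_i$. The conjecture thus reduces to proving a central limit theorem for $S_n$, with both mean and variance growing linearly in $n$.

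My first step would be to set up the filtration $\mathcal{F}_t = \sigma(X_0, \ldots, X_t)$, where $X_t$ is the state after $t$ moves, and to write $S_n = \sum_{t=1}^{T_n} Y_t$ with $Y_t$ the indicator that the $t$-th move is a split. Via the Doob decomposition, $Y_t - \E{Y_t \mid \mathcal{F}_{t-1}}$ is a bounded martingale difference and $\E{Y_t \mid \mathcal{F}_{t-1}}$ is predictable. The plan is then to apply a martingale central limit theorem (e.g.\ McLeish's theorem) to the sum of differences after verifying the Lindeberg condition, and separately to show that the predictable part concentrates around $cn$ with $c \approx 0.215$. Because the indicators are bounded by $1$, the Lindeberg condition reduces to controlling the total conditional variance, which I would expect to grow linearly in $n$.

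To compute the constants and to control the variance, I would exploit a local-mixing or renewal structure of the game. At many points during play the high-index portion of the decomposition is nearly Zeckendorf and the action is concentrated among the low-index Fibonacci numbers; I would attempt to identify a sequence of regeneration epochs where the state simplifies (for instance, where every surviving term has index above some moving threshold). Between consecutive epochs the contribution to $S_n$ should be a bounded random variable with mean and variance controlled uniformly, and summing $\Theta(n)$ such essentially independent contributions would yield both linear mean and linear variance, with constants expressible through the stationary transition probabilities of a local chain on low-index multiplicities. As an intermediate milestone, I would first verify via a second-moment calculation that $\V{S_n} = \Theta(n)$ and that the implied constant matches the experimental value $0.215$.

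The main obstacle is that splitting moves are dependent through the global shape of the decomposition, and the chain is not time-homogeneous in any obvious sense: the number and type of legal moves depend intricately on the current multiset. Establishing enough approximate independence, or equivalently bounding long-range correlations well enough to verify the martingale CLT hypotheses, is likely the most delicate step. A supporting tactic is to combine a coupling argument with the monovariants used to prove Theorem \ref{thm:lineargrowth} in order to bound the influence of early moves on splitting probabilities later in the game. If correlations at long range cannot be sufficiently damped by such arguments, one may need to refine the claim to include covariance corrections, though the strong numerical agreement with a clean Gaussian limit suggests that a direct martingale CLT argument should ultimately succeed.
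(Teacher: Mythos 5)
The statement you are addressing is a conjecture in the paper, not a theorem: the authors offer no proof, only numerical evidence (10{,}000 simulated games at $n = 10^6$ whose standardized split counts fit a standard normal well). So there is no paper proof to compare against, and your proposal must be judged on its own terms as a proposed route to an open problem.

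As such, what you have written is a research program rather than a proof, and the genuine gap is that every step carrying real mathematical content is deferred. The Doob decomposition and the reduction to splitting moves (via Lemma \ref{lem:mcn}, which makes the number of combining moves deterministic) are correct but essentially free. The martingale CLT framework then requires (i) that the sum of conditional variances $\sum_t \mathrm{Var}(Y_t \mid \mathcal{F}_{t-1})$, suitably normalized, converges in probability to a constant, and (ii) that the predictable compensator $\sum_t \mathbb{E}[Y_t \mid \mathcal{F}_{t-1}]$ concentrates at $cn$; both of these are exactly as hard as the conjecture itself, and you do not identify any mechanism that would establish them. The proposed regeneration structure is the natural idea, but you give no candidate definition of the regeneration epochs, no argument that they occur $\Theta(n)$ times, and no reason the inter-epoch contributions are identically distributed or have uniformly bounded moments --- note that the low-index ``active zone'' of the decomposition is fed by a stream of $F_1$'s whose arrival rate changes over the course of the game, so the local chain is not obviously stationary. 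Two further technical points you should flag: the number of terms in your martingale sum is the random stopping time $T_n = n - Z(n) + S_n$, which depends on $S_n$ itself, so an Anscombe-type argument is needed; and the constant $0.215$ must come out of an explicit computation with the stationary law of the local chain, which is nowhere sketched. In short, the skeleton is plausible and consistent with how one would attack such a problem, but nothing in the proposal closes the conjecture, and you should present it as a strategy with the concentration-of-conditional-variance step identified as the open core.
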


\begin{conj}\label{conj:splitsmallestshort}
It was conjectured that the longest game on any $n$ is achieved by applying splitting moves whenever possible. Specifically, the longest possible game applies moves in the following order: adding 1's, splitting from smallest to largest, and adding consecutive indices from smallest to largest. We find another candidate for a longest possible game,  with moves in the following order: splitting from smallest to largest, adding 1's, and adding consecutive indices, from smallest to largest. This is the deterministic game Split Smallest.
\end{conj}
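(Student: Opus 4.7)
The plan is to prove Conjecture \ref{conj:splitsmallestshort} by establishing that the Split Smallest ordering yields the same total number of moves as the previously conjectured longest-game ordering (add 1's first, then split from smallest to largest, then combine consecutive indices from smallest to largest). Because these two orderings differ only in the relative priority of $F_1 \wedge F_1 \to F_2$ versus the smallest available split, the proof reduces to showing that swapping these two priorities does not alter the final move count.

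First I would prove a local commutation lemma: whenever both an adding 1's move and a split at some index $i \geq 2$ are simultaneously legal, performing them in either order leads to the same resulting multiset, possibly after one additional adding 1's move is executed. For $i \geq 4$ the two moves operate on disjoint indices and commute outright. For $i = 2$ the split $F_2 \wedge F_2 \to F_1 \wedge F_3$ creates a new $F_1$, and a direct case check shows that both orderings reach the same multiset once all newly enabled adding 1's have been applied. The case $i = 3$ similarly introduces a new $F_1$ via $F_3 \wedge F_3 \to F_1 \wedge F_4$ and is handled identically. Coupling the two games step by step via induction then shows that from every shared reachable state the two orderings execute the same next move or differ by a local commutation, so they terminate with equal move counts.

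As further corroboration, I would invoke Theorem \ref{thm:lineargrowth} together with the invariance of the combining counts $MC_i$ established in its proof. Since the combine counts at each index are game-independent, the split counts $MS_2$ and $MS_3$ are determined by $MC_1$ and $MC_2$, forcing the total move count to be order-independent within this family of orderings that defer combining moves to the end. Consequently, Split Smallest realizes the same maximum as the previously conjectured longest game.

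The main obstacle is that splits at indices $2$ or $3$ inject new $F_1$ copies, which cascade to generate additional $F_1 \wedge F_1 \to F_2$ moves that themselves influence the availability of later splits. A careful bookkeeping argument is required to confirm that the number of such induced adding 1's is conserved under the swap. The technical heart of the proof is to identify a symbolic invariant, defined on intermediate game states rather than only on terminal ones, that is preserved under the local commutation and that tracks both the current multiset and the remaining \emph{cascade potential}; the monovariants already developed to prove Theorem \ref{thm:lineargrowth} should provide the right starting point, and the fact that combine counts have been pinned down exactly means the final tally must match on the nose.
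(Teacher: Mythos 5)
This statement is a conjecture in the paper, not a theorem: the authors offer only computational evidence (identical move counts for the two orderings for all $n<1{,}600{,}000$, and exhaustive enumeration for $n\le 150$ showing these games are longest), so there is no proof to compare against. Your proposal is a sketch of a possible proof, but it has a concrete error and a structural gap. The error: you assert that ``the combine counts at each index are game-independent,'' but Lemma \ref{lem:mcn} only shows the \emph{total} $MC(n)=\sum_i MC_i = n-Z(n)$ is invariant; the individual $MC_i$ are not. For $n=4$, the game $\{1^4\}\to\{1^2\wedge 2\}\to\{2^2\}\to\{1\wedge 3\}$ has $MC_1=2$, $MC_2=0$, $MS_2=1$, while $\{1^4\}\to\{1^2\wedge 2\}\to\{1\wedge 3\}$ has $MC_1=1$, $MC_2=1$, $MS_2=0$. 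So the relation $MS_2+MS_3=2MC_1+2MC_2-n+\delta_1$ does not pin down the split counts across games, and your second argument collapses (it would also wrongly imply all games in this family have the same length as \emph{every} game, contradicting the $n=4$ example where total move counts are 3 versus 2).

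The structural gap: the conjecture has two parts, and your commutation argument only targets the first (that Split Smallest and the add-1's-first ordering have equal length); it says nothing about why either ordering is \emph{maximal} over all legal games, which is the substantive claim. Even for the equality, the local commutation lemma is the easy part; the hard part is the step-by-step coupling. A split at index 2 or 3 injects a new $F_1$, which under one priority ordering immediately triggers an adding-1's move and under the other does not, so the two games' states genuinely diverge (not merely by a pending adding-1's move) and the divergence can propagate through subsequent split availability. You acknowledge this ``cascade'' issue but the invariant that would control it is exactly what is missing, and the monovariants of \S\ref{sec:monovariants} track only aggregate index sums and term counts, which (as the $n=4$ example shows) are too coarse to distinguish move counts. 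As written, the proposal does not constitute a proof of the conjecture.
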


\section{Monovariants}\label{sec:monovariants}

We use two monovariants in our investigation. The first is obvious, and has not been explicitly isolated in earlier work.

\ \\

\noindent \textbf{Monovariant I: the number of terms in the decomposition of $n$ never increases throughout the game.}

\ \\

The proof is immediate, as we only have two types of moves. One combines two terms into one, which decreases by 1 the number of terms in the decomposition of $n$; the other splits a repeated term into two distinct terms, which does not change the number of summands.

\ \\

\noindent \textbf{Monovariant II: The sum of the indices indices in the decomposition of $n$ never increases throughout the game.}

\ \\

This is slightly different than the monovariant used in \cite{BEFMfibq, BEFMcant}, where the sum of the square-root of the indices is studied. The advantage of this quantity is that this sum is strictly decreases, and must decrease by at least a fixed positive amount which is bounded below by a positive function of $n$. Thus one can immediately deduce that the Zeckendorf game terminates. Related monovariants are used to analyze a related property of Zeckendorf decompositions, both for the Fibonacci and other recurrences: of all decompositions of an integer $n$ as a sum of Fibonacci numbers, no decomposition has fewer summands than the Zeckendorf decomposition. In \cite{CHHMPV}, the authors find conditions on recurrence relations where no decomposition has fewer summands than the Generalized Zeckedorf decomposition.

For our purposes, however, it is easier to work with Monovariant II. It is straightforward to show that the sum of the indices never increases. We only have four moves to consider (as moves involving the index 1 are defined slightly differently, we need to study that case separately).

\begin{itemize}

\item Adding consecutive terms: If we replace $F_k \wedge F_{k-1}$ with $F_{k+1}$, for $k \ge 2$, we replace $k + k-1$ with $k+1$, and the sum of the indices has decreased by $k-2 \ge 0$ (it is positive so long as $k \ge 3$).

\item	Adding 1's: If we replace $F_1 \wedge F_1$ with $F_2$ there is no change in the sum of the indices, as $1+1 = 2$.

\item Splitting terms: If we replace $F_k \wedge F_k$ with $F_{k+1} \wedge F_{k-2}$, for $k \ge 3$, we replace $2k$ with $(k+1)+(k-2)$, and the sum of the indices has decreased by 1.

\item Splitting 2's: If we replace $2F_2$ with $F_3\wedge F_1$ then there is no change in the sum of the indices, as $2\cdot 2 = 3 + 1$.

\end{itemize}

\section{Length of Games}\label{sec:lengthofgames}

We use the two monovariants introduced in \S\ref{sec:monovariants} to derive bounds for weighted sums of the number of each type of move; our theorems are then immediate consequences. We first isolate some lemmas which will be useful in the proof.

\subsection{Preliminary Lemmas}

When the game starts, the sum of the indices is $n$. From \cite{BEFMfibq, BEFMcant} the game always ends in $n$'s Zecknedorf decomposition, which by definition has $IZ(n)$ summands. Let $i_{\max}(n)$ be the largest index $m$ such that $F_m \le n$; this will be the largest index in $n$'s decomposition, and by Binet's formula we know \begin{equation} F_m \ =\ \frac{1}{\sqrt{5}}\left(\frac{1+\sqrt{5}}{2}\right)^{m+1} -  \frac{1}{\sqrt{5}}\left(\frac{1-\sqrt{5}}{2}\right)^{m+1} \ = \ \frac{\phi^{m+1}}{\sqrt{5}} - \frac{(1-\phi)^{m+1}}{\sqrt{5}}, \end{equation}  where $\phi$ is the Golden mean. As the second term above exponentially decays to zero, we have $F_m \approx \phi^{m+1}/\sqrt{5}$. Thus the largest index is bounded by $\lfloor\log_{\phi}(n \sqrt{5})-1\rfloor + 1$, or \begin{equation}\label{eq:imax} i_{\max}(n)\ \le\  \log_{\phi}(n\sqrt{5}), \end{equation} which is of order $\log_{\phi}(n)$.


\begin{lemma} The sum of the indices in $n$'s decomposition, $IZ(n)$, is bounded by a constant multiple of $\log_\phi^2(n)$: \begin{equation} IZ(n) \ \le \   \frac{(\log_{\phi}(n\sqrt{5}) + 3)^2}{2}. \end{equation} \end{lemma}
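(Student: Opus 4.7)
The plan is to use the fact that the Zeckendorf decomposition of $n$ uses distinct indices, all of which are at most $i_{\max}(n)$, and then invoke the logarithmic bound on $i_{\max}(n)$ given in equation \eqref{eq:imax}.

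First I would write $n = F_{k_1} + F_{k_2} + \cdots + F_{k_r}$ for the Zeckendorf decomposition, so that $IZ(n) = k_1 + k_2 + \cdots + k_r$. By uniqueness of the Zeckendorf decomposition the indices $k_j$ are distinct, and by definition each $k_j \le i_{\max}(n)$. Setting $M = i_{\max}(n)$, this immediately gives the crude bound
\begin{equation}
IZ(n) \ \le\ \sum_{k=1}^{M} k \ =\ \frac{M(M+1)}{2} \ \le\ \frac{(M+1)^2}{2}.
\end{equation}
(One could sharpen this by using the non-adjacency condition to bound $IZ(n) \le M + (M-2) + (M-4) + \cdots$, yielding a constant of roughly $1/4$ instead of $1/2$, but such a refinement is not needed for the stated inequality.)

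Next I would plug in the logarithmic bound from \eqref{eq:imax}, namely $M \le \log_{\phi}(n\sqrt{5})$, to get
\begin{equation}
IZ(n) \ \le\ \frac{(\log_{\phi}(n\sqrt{5}) + 1)^2}{2} \ \le\ \frac{(\log_{\phi}(n\sqrt{5}) + 3)^2}{2},
\end{equation}
which is the desired inequality. The slack between $+1$ and $+3$ simply absorbs any boundary effects from the floor appearing in the definition of $i_{\max}(n)$ and keeps the statement clean for later use.

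There is no real obstacle here; the result is a direct consequence of combining the elementary observation that Zeckendorf indices are distinct and bounded by $i_{\max}(n)$ with the logarithmic bound on $i_{\max}(n)$ derived from Binet's formula. The only thing to be careful about is to state the inequality in a form loose enough to accommodate the floor in $i_{\max}(n)$, which is why the $+3$ in the statement is convenient rather than tight.
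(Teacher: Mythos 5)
Your proof is correct and follows essentially the same strategy as the paper: bound $IZ(n)$ by the largest possible sum of admissible indices, then apply the logarithmic bound on $i_{\max}(n)$ from \eqref{eq:imax}. The only difference is that the paper exploits non-adjacency to sum every other index (giving roughly half as many terms), whereas you use mere distinctness and sum all indices up to $i_{\max}(n)$; your cruder count of $M(M+1)/2 \le (M+1)^2/2$ still lands comfortably under the stated bound, so nothing is lost.
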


\begin{proof}
As the Zeckendorf decomposition cannot have adjacent summands, the maximum sum of indices is the sum of every other term to $i_{\max}(n)$. To be safe, we start with index 2 and end at $i_{\max}(n) + 2$; this negligibly increases the sum as we are only adding one more term, while the sum is on the order of the square of the largest summand. We have
\begin{eqnarray} 2 + 4 + \cdots + (i_{\max}(n) + 2) & \ = \ & 2\left(1 + 2 + \cdots + \frac{i_{\max}(n) + 2}{2}\right) \nonumber\\ & \ = \ & 2\frac{i_{\max}(n) + 2}{2} \frac{i_{\max}(n) + 4}{2} \ < \ \frac{(i_{\max}(n) + 3)^2}{2}. \end{eqnarray} From \eqref{eq:imax} we know $i_{\max}(n) \le \log_{\phi}(n\sqrt{5})$; thus \begin{equation} IZ(n) \ \le \ \frac{(\log_{\phi}(n\sqrt{5}) + 3)^2}{2}, \end{equation} so $IZ(n)$ is bounded by a constant multiple of $\log_{\phi}^2(n)$.
\end{proof}

We now prove the first of the two useful results on weighted sums of indices.

\begin{lemma}\label{lem:mc3tomsmax} We have
\begin{equation} MC_3  + 2MC_4 + \cdots + (i_{\max}(n) -2)) MC_{i_{\max}(n)}  + MS_3 + MS_4  + \cdots +  MS_{i_{\max}(n)}  \ = \ n - IZ(n). \end{equation} In particular, for $2 \le k \le i_{\max}(n)$ we have \begin{equation} MC_k \ \le\ \frac{n-IZ(n)}{k-2}.\end{equation}
\end{lemma}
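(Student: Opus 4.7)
The plan is to apply Monovariant II (the sum of indices) directly, computing the global drop in two different ways and equating them. For the endpoints: the game begins with the multiset $\{F_1^n\}$, whose starting index-sum is $n \cdot 1 = n$, and by the termination result recalled in the introduction, every play ends at the Zeckendorf decomposition of $n$, whose index-sum is $IZ(n)$ by definition. Hence over the whole game the monovariant drops by exactly $n - IZ(n)$.

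Next I would compute that same total drop move-by-move, using the per-move bookkeeping already carried out in Section \ref{sec:monovariants}. A combining move $F_{i-1} \wedge F_i \to F_{i+1}$ (an $MC_i$ with $i \ge 2$) reduces the index-sum by $(i-1)+i-(i+1)=i-2$, which vanishes for $i=2$ and equals $i-2$ for $i \ge 3$; adding 1's ($MC_1$) contributes $0$; splitting two $F_2$'s contributes $0$; and each split $F_i \wedge F_i \to F_{i-2} \wedge F_{i+1}$ with $i \ge 3$ contributes $1$. Note also that any move at $i = i_{\max}(n)$ would have to produce $F_{i_{\max}(n)+1} > n$, which cannot appear during a game summing to $n$, so those boundary terms are automatically zero and their inclusion is harmless. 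Summing the contributions over all moves played yields
\[
\sum_{i=3}^{i_{\max}(n)} (i-2)\, MC_i \ + \ \sum_{i=3}^{i_{\max}(n)} MS_i,
\]
and equating this with the global drop $n - IZ(n)$ gives the claimed identity.

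The pointwise bound follows at once: every summand on the left-hand side of the identity is non-negative, so dropping all terms other than $(k-2)\, MC_k$ yields $(k-2)\, MC_k \le n - IZ(n)$, which rearranges to $MC_k \le (n - IZ(n))/(k-2)$ for $k \ge 3$. There is no real obstacle here; the analytic heart is Monovariant II, already established in Section \ref{sec:monovariants}, and the proof reduces to a single summation identity once the per-move drops and the two endpoints are substituted. The only thing to keep track of is the indexing convention, so that the coefficient $i-2$ in the identity lines up exactly with the index-sum decrease produced by the combine $F_{i-1} \wedge F_i \to F_{i+1}$, and so that the vanishing contributions of $MC_1$, $MC_2$, and $MS_2$ explain why the two sums begin at $i=3$.
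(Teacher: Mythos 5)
Your proposal is correct and follows essentially the same route as the paper: compute the total drop of the index-sum monovariant from the endpoints ($n$ down to $IZ(n)$), compute it again move-by-move using the per-move drops from Section \ref{sec:monovariants}, equate, and then drop non-negative terms to isolate $(k-2)MC_k$. Your restriction of the final bound to $k \ge 3$ is in fact the right reading, since the stated range $2 \le k$ degenerates at $k = 2$ where the coefficient $k-2$ vanishes.
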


\begin{proof}
When the game starts we have $n$ copies of $F_1$, for an index sum of $n$; the game ends in $n$'s Zeckendorf decomposition, with index sum of $IZ(n)$. Thus the change in the index sum, $n - IZ(n)$, must equal the change from each move. We now compute that change by looking at what happens at each index.

From \S\ref{sec:monovariants}, the index sum changes by $k-2$ when we combine at index $k$ if $k \ge 2$. Thus the contribution from all these moves is $(k-2)M_k$; note there is no contribution when $k=2$. There is no change in the index sum from adding 1's, so $MC_1$ will not appear in the relation; similarly $MS_2$ will not appear as that moves also does not change the index sum. If $k \ge 3$ then the splitting move at $k$ decreases the index sum by 1, so the contribution of all the splitting moves at $k$ is simply $MS_k$. Combining, we find \begin{equation} MC_3 + 2 MC_4 + 3 MC_5 + \cdots + (i_{\max}(n) -2)) MC_{i_{\max}(n)}  + MS_3 + \cdots +  MS_{i_{\max}(n)}  \ = \ n - IZ(n). \end{equation} The bound on $MC_k$ follows immediately; as we will show later that the number of moves is at most $3n$, we can obtain similar linear in $n$ bounds for $MC_1$ and $MC_2$.
\end{proof}

\begin{lemma}\label{lem:mcn} The number of combining moves in a game, $MC(n)$, is independent of how the game is played, and \begin{equation}\label{eq:mcnsum}
MC(n) \ :=\  MC_1  +  MC_2 + \cdots + MC_{i_{\max}(n)} \  = \  n- Z(n), \end{equation} where $Z(n)$ is the number of terms in $n$'s Zeckendorf decomposition. \end{lemma}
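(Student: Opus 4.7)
The plan is to use Monovariant I directly. At any point in the game, let $T$ denote the current number of summands (counted with multiplicity) in the decomposition of $n$. I will track how $T$ changes under each of the allowed moves.

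First I would classify the moves by their effect on $T$. A combining move of type (1), $\{F_{i-1} \wedge F_i \rightarrow F_{i+1}\}$ for $i \ge 2$, replaces two summands by one, so $T$ decreases by exactly $1$; the move $\{F_1 \wedge F_1 \rightarrow F_2\}$ (the $MC_1$ move) likewise replaces two summands by one, so again $T$ decreases by exactly $1$. On the other hand, the splitting moves $\{F_2 \wedge F_2 \rightarrow F_1 \wedge F_3\}$ and $\{F_i \wedge F_i \rightarrow F_{i-2} \wedge F_{i+1}\}$ for $i \ge 3$ each replace two summands with two summands, so $T$ is unchanged. Thus every combining move decreases $T$ by $1$ and every splitting move leaves $T$ fixed.

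Next I would compare the initial and terminal values of $T$. At the start of the game the decomposition is $\{{F_1}^n\}$, so $T = n$. By the main result of \cite{BEFMfibq, BEFMcant}, every play of the game terminates at the Zeckendorf decomposition of $n$, which by definition has exactly $Z(n)$ summands. Therefore the total decrease in $T$ over the course of the game is $n - Z(n)$. Since splitting moves do not change $T$ and each combining move decreases $T$ by exactly $1$, the total number of combining moves satisfies
\begin{equation}
MC(n) \ = \ MC_1 + MC_2 + \cdots + MC_{i_{\max}(n)} \ = \ n - Z(n),
\end{equation}
which proves \eqref{eq:mcnsum}. In particular, because both $n$ and $Z(n)$ depend only on the starting integer and not on the sequence of legal moves chosen, $MC(n)$ is independent of how the game is played.

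There is no real obstacle here: the argument is purely a bookkeeping consequence of Monovariant I together with the known fact that every game terminates in the Zeckendorf decomposition. The only subtlety worth noting is the choice, explained in Remark \ref{rek:whycombine}, to classify $\{F_1 \wedge F_1 \rightarrow F_2\}$ as a combining move rather than a splitting one; this classification is exactly what makes the count of $T$-decreasing moves coincide with $MC(n)$ as defined.
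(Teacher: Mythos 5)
Your proposal is correct and follows essentially the same argument as the paper: both track the number of summands (Monovariant I), note that each combining move decreases it by exactly one while splitting moves leave it unchanged, and conclude from the initial count $n$ and terminal count $Z(n)$ that $MC(n) = n - Z(n)$ independently of play. No gaps; your explicit case check of each move type and the remark about classifying $\{F_1 \wedge F_1 \rightarrow F_2\}$ as combining are consistent with the paper's Remark \ref{rek:whycombine}.
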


\begin{proof} As we start with $n$ indices and end with $Z(n)$ indices, the change in indices throughout the game is $n-Z(n)$. Each splitting move leaves the number of terms in $n$'s decomposition alone, while each combining move decreases the number of terms by one; \eqref{eq:mcnsum} follows immediately, and we see in particular this quantity is independent of how the game is played.
\end{proof}

\begin{rek}\label{rek:whycombine} The lemma above is why we view $F_1 \wedge F_1 = F_2$ as a combining move and not a splitting move $F_1 \wedge F_1 = F_2 \wedge F_{-1}$. Not only do we not have $F_{-1}$ as an available summand, but with this definition all combining moves decrease the number of summands by 1 while each splitting move leaves the number unchanged. \end{rek}

\subsection{Proof of Theorem \ref{thm:lineargrowth}}\label{sec:lineargrowth}

We now prove our main result.

\begin{proof}[Proof of Theorem \ref{thm:lineargrowth}]
Lemmas \ref{lem:mc3tomsmax} and \ref{lem:mcn} almost suffice for the proof, as they provide bounds for weighted sums of $MC_1, \dots, MC_{i_{\max}(n)}, MS_4, \dots, MS_{i_{\max}(n)}$. From Lemma \ref{lem:mc3tomsmax} we have
\begin{equation} MC_3  + 2MC_4 + \cdots + (i_{\max}(n) -2)) MC_{i_{\max}(n)}  + MS_3 + MS_4  + \cdots +  MS_{i_{\max}(n)}  \ = \ n - IZ(n), \end{equation} and thus
\begin{equation}\label{eq:main1} MC_3 + MC_4 + \cdots + MC_{i_{\max}(n)}  + MS_3 + MS_4  + \cdots +  MS_{i_{\max}(n)} \ \le \ n - IZ(n). \end{equation} Note the sum above misses $MC_1, MC_2$ and $MS_2$.

We can easily bound $MC_1 + MC_2$ from Lemma \ref{lem:mcn}: \begin{equation}
MC(n) \ :=\  MC_1  +  MC_2 + \cdots + MC_{i_{\max}(n)} \  = \  n- Z(n); \end{equation} thus $MC_1 + MC_2 \le n - Z(n)$, and adding this to \eqref{eq:main1} yields \begin{equation}\label{eq:main2} MC_1 + \cdots + MC_{i_{\max}(n)}  + MS_3 + \cdots +  MS_{i_{\max}(n)} \ \le \ 2n - Z(n) - IZ(n). \end{equation}

All that remains is to bound $MS_2$. We start the game with $n$ copies of $F_1$, and let $\delta_1$ be how many $F_1$'s we have when the game terminates in $n$'s Zeckendorf decomposition; clearly $\delta_1 \in \{0, 1\}$ as we cannot have a repeated summand from the definition of the Zeckendorf decomposition, and its value is independent of how the game is played. Every time we combine two 1's to make a 2 ($F_1 \wedge F_1 = F_2$) we decrease the number of $F_1$'s by 2, while every time we combine a 1 and a 2 to make a 3 ($F_1 \wedge F_2 = F_3$) we decrease the number of $F_1$'s by 1. These are the only moves that decrease the number of $F_1$'s; the only moves that increase the number of $F_1$'s are the splitting moves at 1 and 2. Each splitting move at 2 ($2 F_2 = F_3 \wedge F_1$) increases the number of $F_1$'s by 1, as does each splitting move at 3 ($2F_3 = F_4 \wedge F_1$). Thus \begin{equation} n - 2MC_1 - 2MC_2 + MS_2 + MS_3 \ = \ \delta_1, \ \ \ {\rm or} \ \ \ MS_2 + MS_3 \ = \ 2MC_1 + 2MC_2 - n + \delta. \end{equation} By Lemma \ref{lem:mcn} we can bound $MC_1 + MC_2$ by $MC(n) \le n - Z(n)$, and thus \begin{equation} MS_2 \ \le \ 2(n - Z(n)) - n + 1 \ = \ n - 2 Z(n) + 1. \end{equation}

Combining our bound for $MS_2$ with what we have established in \eqref{eq:main2} yields
\begin{equation} MC_1 + \cdots + MC_{i_{\max}(n)} + MS_1 + \cdots + MS_{i_{\max}(n)} \ \le \ 3n - 3Z(n) - IZ(n) + 1, \end{equation} which is essentially of size $3n$ as $Z(n)$ is of order $\log_\phi(n)$ and $IZ(n)$ is at most $\log_\phi^2(n)$.

This proves the upper bound for any game grows linearly with $n$; as every game exactly $MC(n) = n - Z(n)$ combining moves, the lower bound is also linear in $n$, completing the proof.
\end{proof}

\subsection{Analysis of Combine Largest and Split Largest}

\begin{proof}[Proof of Theorem \ref{thm:movesombinelargestsplitlargest}] We prove that the Combine Largest and Split Largest games also realize the lower bound. Remember that for each game the order in which we do moves is as follows:

\begin{itemize}

\item	\emph{Combine largest:} adding consecutive indices from largest to smallest, adding 1's, splitting from largest to smallest.

\item	\emph{Split largest:} splitting from largest to smallest, adding consecutive indices from largest to smallest, adding 1's.

\end{itemize}

The claims follow if we can show in each game we never have a splitting move, as we showed in Lemma \ref{lem:mcn} that the number of combining moves is always $MC(n) = n - Z(n)$.

We show that for these two games, at any state in the game if  $F_i^j$ is in the current decomposition of $n$ and $i \ge 2$, then $j = 1$. We proceed by induction. The initial list, our base case, is $\{F_1^n\}$, so the claim is true before the first move.  The first move must be adding 1's: $F_1^n  \to F_1^{n-2} \wedge  F_2$. After this move, the statement is still true. Though not necessary, we can check and see that the claim is still true after the second and third moves.

We now turn to the inductive step; we may assume that if you look at the decomposition of $n$ then for any index $i$ if $F_i^j$ is in the list, then $j = 1$.  Since there is no possible splitting move as each index appears at most once, our move must be chosen in the following order: adding consecutive terms from largest to smallest, and adding 1's.

\emph{Case 1: Consecutive Terms:} If the list contains two or more pairs of consecutive Fibonacci numbers, the largest $F_i$ is selected for the combining move. Note we \emph{cannot} have $F_{i+1}$ in our list, as if that were the case we would have chosen $F_{i+1}$ as the largest term. Thus after this move we still have each index $i \ge 2$ appearing at most once.

\emph{Case 2: No Consecutive Terms:} If $n$'s decomposition does not contain consecutive Fibonacci numbers, then the only possible move is adding 1's. Note that we \emph{cannot} have $F_2$ in our list, as if we did we would have combined $F_1$ and $F_2$. Thus after combining two 1's we still do not have any index $i \ge 2$ occurring more than once.
\end{proof}

\section{Conjectures on the Number of Moves}

Conjecture \ref{thm:movessplitsmallcombinesplit} states that for Split Smallest, the number of moves grows linearly with $n$  (numerically the constant appears to be the golden mean squared), while for Combine Smallest games, the number of moves grows linearly with $n$ (with the constant appearing to be approximately $1.206$). We arrived at these results from analyzing large numbers of games; we provide representative examples in Figures \ref{fig:splitsmallest1600000plot} and \ref{fig:combinesmallest1600000plot}.


\begin{figure}[h]
\begin{center}
\scalebox{.475}{\includegraphics{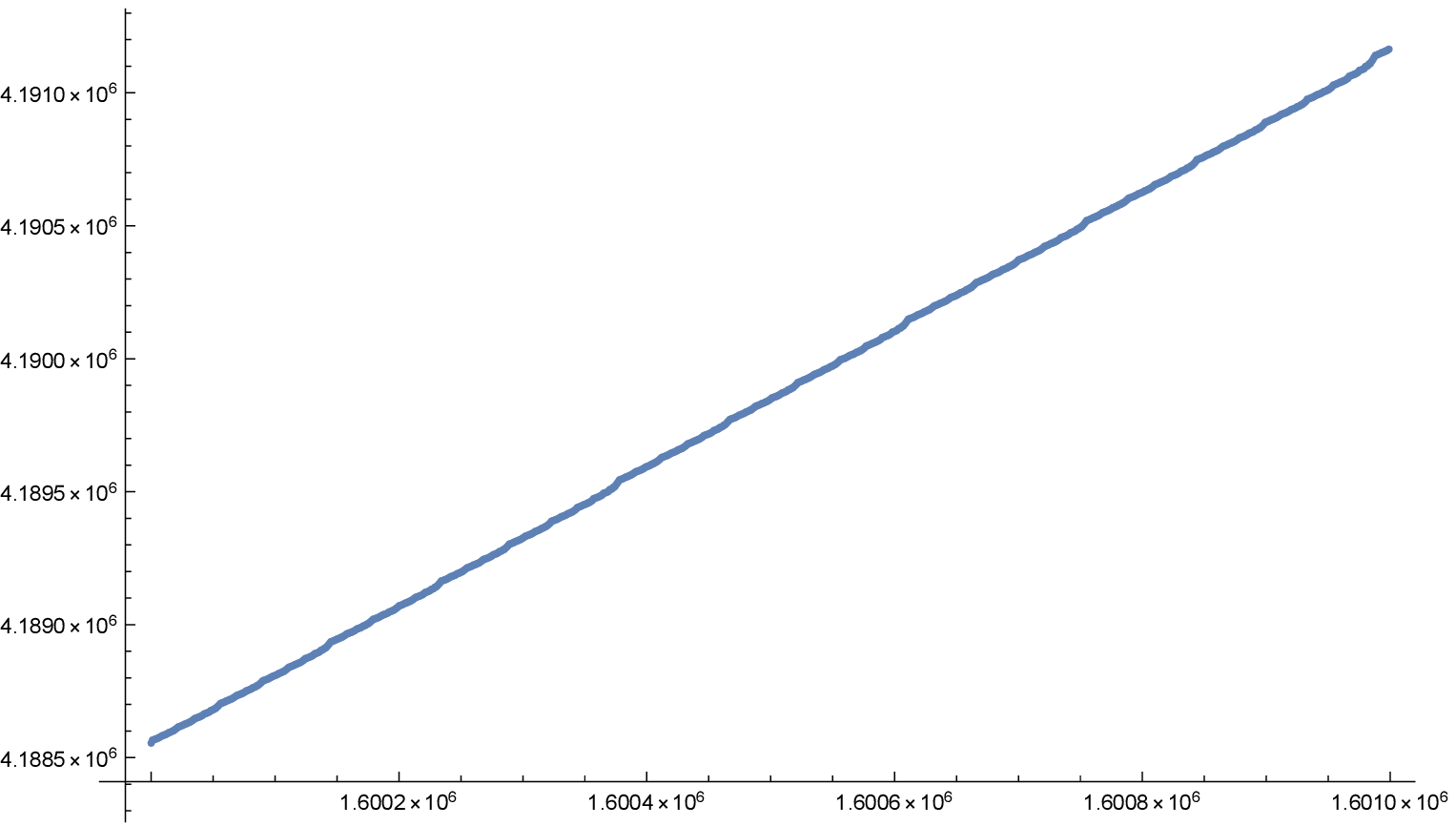}} \ \ \scalebox{.475}{\includegraphics{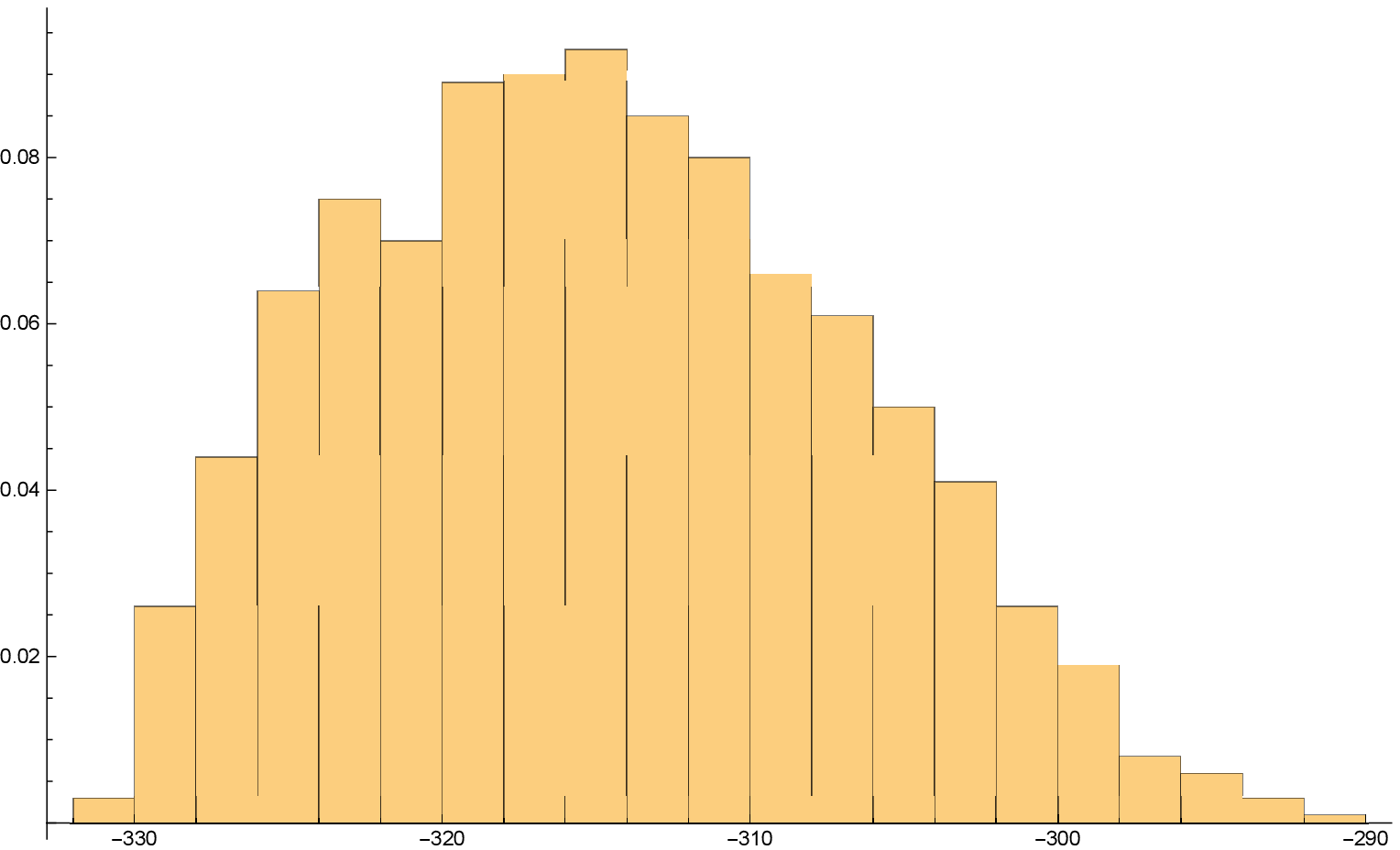}}
\caption{\label{fig:splitsmallest1600000plot} Results of deterministic game Split Smallest for 1000 consecutive $n$, starting at 1,600,000. Left: Plot of the number of moves versus $n$. Right: Histogram of number of moves for $n$  minus $\phi^2 n$, where $\phi = (1+\sqrt{5})/2$ is the Golden Mean.}
\end{center}\end{figure}

\begin{figure}[h]
\begin{center}
\scalebox{.475}{\includegraphics{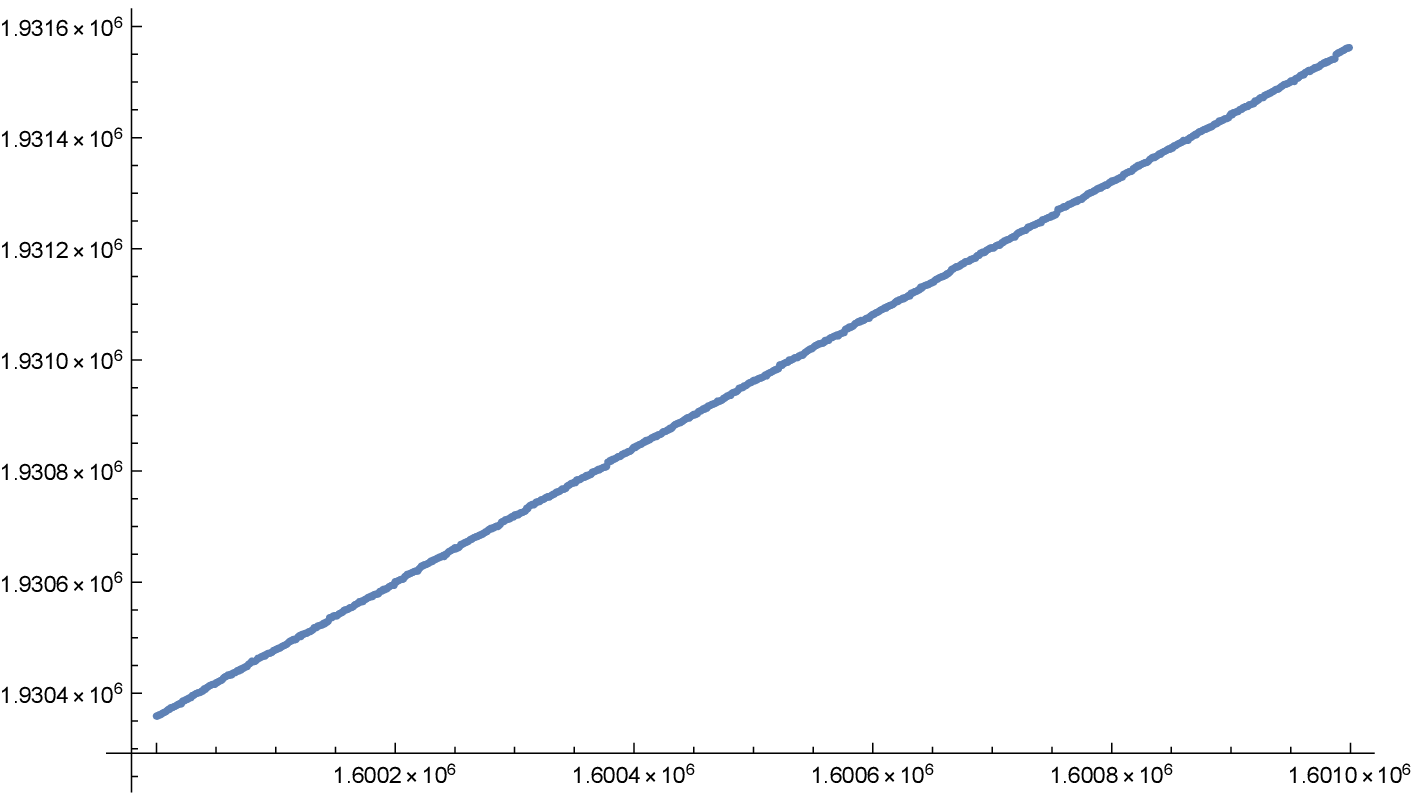}} \ \ \scalebox{.475}{\includegraphics{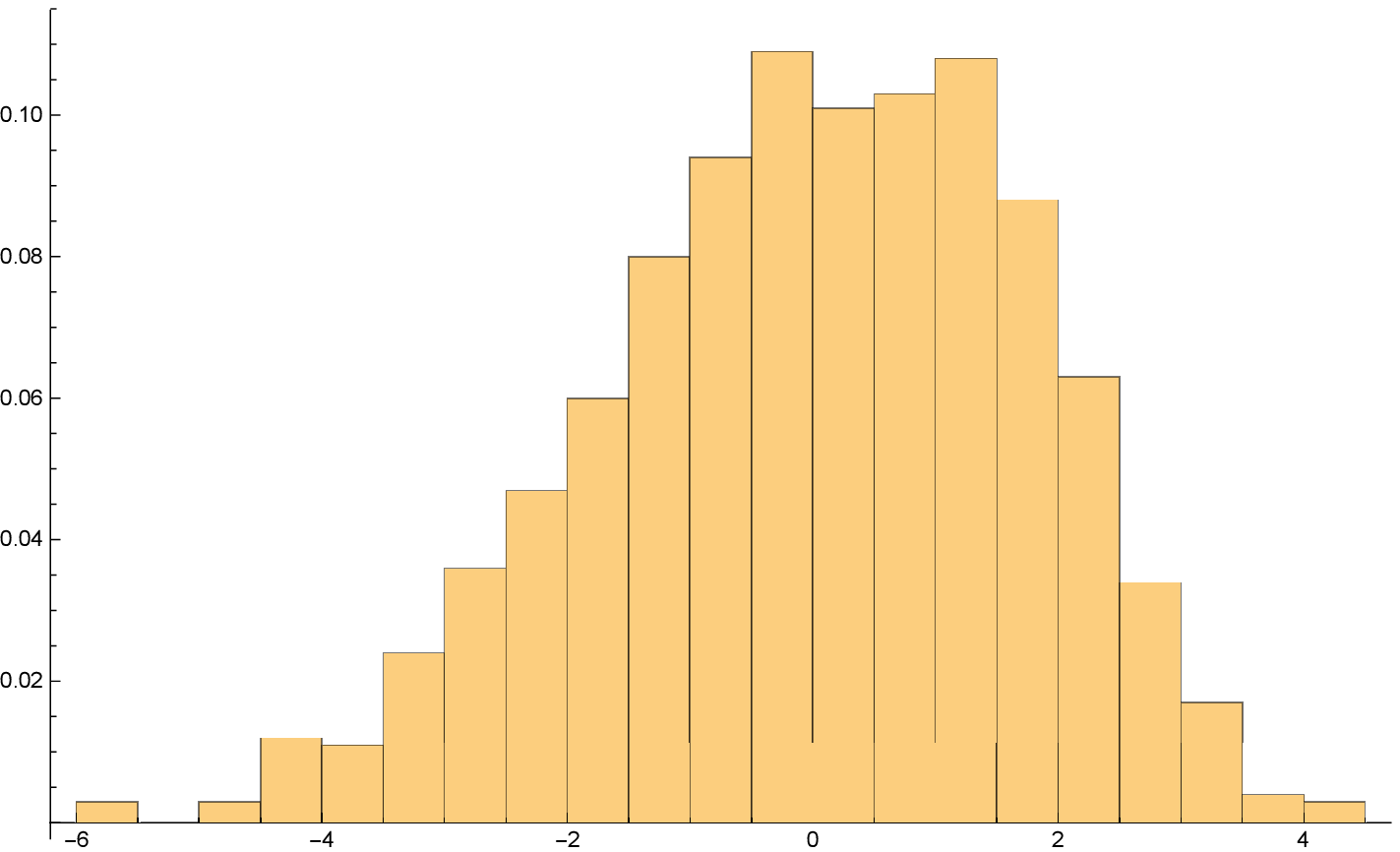}}
\caption{\label{fig:combinesmallest1600000plot} Results of deterministic game Combine Smallest for 1000 consecutive $n$, starting at 1,600,000. Left: Plot of the number of moves versus $n$. Right: Histogram of number of moves for $n$  minus $1.20647 n$.}
\end{center}\end{figure}

We then looked at random games. By Lemma \ref{lem:mcn} the number of combining moves in a game depends only on $n$; thus all that varies is the number of splitting moves. We played 10,000 games with $n$ equal to one million, and display the results in Figure \ref{fig:gaussian1000000} plotted against a Gaussian. The fit is very good, providing support for Conjecture \ref{conj:gaussian}.

\begin{figure}[h]
\begin{center}
\scalebox{1}{\includegraphics{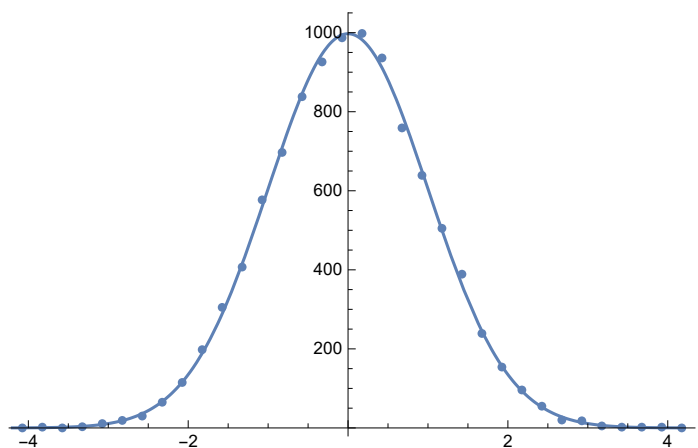}}
\caption{\label{fig:gaussian1000000} Plot of 10,000 random games with $n = 1,000,000$; the data was standardized to have mean 0 and variance 1, and we overlay a plot of the standard normal.}
\end{center}\end{figure}

Finally, our investigations of the four deterministic games, and Lemma \ref{lem:mcn} which states that all games have the same number of combining moves, provides support for Conjecture \ref{conj:splitsmallestshort}. The move counts from the two deterministic algorithms in the conjecture were identical for all $n < 1,600,000$.

We wrote a Java program to explore Zeckendorf games, available at \begin{center} \small \bburl{https://web.williams.edu/Mathematics/sjmiller/public_html/math/papers/ZGame.zip}.\normalsize \end{center} In enumerating all games with $n \le 150$ we found the two deterministic games of Conjecture \ref{conj:splitsmallestshort} always had the largest number of moves among all games for a given $n$.


\section{Future Work}

There are many questions related to the Zeckendorf game which can be investigated; several of these will be done by students of Miller in the 2020 PolymathREU. These include the following.

\begin{itemize}

\item The lower and upper bound on game lengths differ by essentially a factor of 3; what is true about the number of moves in most games or in a random game?

\item From \cite{BEFMfibq, BEFMcant} we know that if $n > 2$ then Player Two has a winning strategy; what is it?

\item What if there are $p$ players; what can you say about winning strategies for various $n$ and $p$?

\item The number of combining moves is always $n - Z(n)$, while the number of splitting moves appears to grow linearly with $n$, at approximately $1.206 n$; prove the latter.

\item For each of the four deterministic games, how long do they take and who wins as a function of $n$?

\end{itemize}


\section{Future Work}

Studies of Zeckerdorff game can be extended in many more ways. This paper covered the Zeckendorf Game quite extensively. We proposed the approach to study combining/adding moves and splitting moves separately, the improved upper bounds was found on the number of moves in any game, in the end a few deterministic algorithms were analyzed. We leave the general Player 2 winning strategy for future research.


\medskip


\end{document}